\numberwithin{equation}{section}
\newtheorem{theorem}{Theorem}[section]
\newtheorem{lemma}[theorem]{Lemma}
\theoremstyle{definition}
\newtheorem{remark}{Remark}[section]
\newcommand{\norm}[1]{\left\Vert#1\right\Vert}
\newcommand{\Rmnum}[1]{\expandafter\@slowromancap\romannumeral #1@}
\DeclareMathOperator\divg{div}
\title[On the radius of spatial analyticity for the inviscid Boussinesq equations]
{On the radius of spatial analyticity for the inviscid Boussinesq equations}
\author{Feng Cheng, Chao-Jiang Xu}
\date{}
\address{\noindent \textsc{Feng Cheng, Hubei Key Laboratory of Applied Mathematics, Faculty of Mathematics and Statistics, Hubei university, Wuhan 430062, P.R. China}}
\email{fengcheng@hubu.edu.cn}
\address{\noindent \textsc{Chao-Jiang Xu,
Department of Mathematics, Nanjing University of Aeronautics and Astronautics, 211106 Nanjing, China\\
and\\
Universit\'e de Rouen, CNRS UMR 6085, Laboratoire de Math\'ematiques, 76801 Saint-Etienne du Rouvray, France}}
\email{xuchaojiang@nuaa.edu.cn}
\begin{document}

\keywords{Boussinesq equations, Analyticity, Radius of analyticity}
\subjclass[2010]{35Q35,76B03,76W05}

\begin{abstract}
In this paper, we study the problem of analyticity of smooth solutions of the inviscid Boussinesq equations. If the initial datum is real-analytic, the solution remains real-analytic on the existence interval. By an inductive method we can obtain lower bounds on the radius of spatial analyticity of the smooth solution.
\end{abstract}

\maketitle

\section{Introduction}
In this paper, we consider the following multi-dimensional invisicd Boussinesq equation on the torus $\mathbb{T}^d$,
\begin{equation}\label{1.1}
\left\{
\begin{aligned}
&\partial_t{ u}+({ u}\cdot\nabla){ u}+\nabla p=\theta { e_d},\\
&\partial_t\theta+({ u}\cdot\nabla)\theta=0,\\
&\divg { u}=0,\\
&{ u}(x,0)={ u}_0(x), \theta(x,0)=\theta_0(x),
\end{aligned}
\right.
\end{equation}
with $\divg { u}_0=0$. Here, ${ u}=(u_1,\ldots,u_d)$ is the velocity field, $p$ the scalar pressure, and $\theta$ the scalar density. ${ e_d}$ denotes the vertical unit vector $(0,\ldots,0,1)$.
The Boussinesq systems play an important role in geophysical fluids such as atmospheric fronts and oceanic circulation (see, e.g., \cite{G,M,P}). Moreover, the Boussinesq systems are important for the study of the Rayleigh-Benard convection, see \cite{CD,DG}.

Besides the physical importance, the invisicd Boussinesq equations can also be viewed as simplified model compared with the Euler equation. In the case $d=2$, the 2D inviscid Boussinesq equations share some key features with the 3D Euler equations such as the vortex stretching mechanism. It was also pointed out in \cite{MB} that the 2D invisicid Boussinesq equations are identical to the Euler equations for the 3D axisymmetric swirling flows outside the symmetric axis.

The inviscid Boussinesq equations have been studied by many authors through the years, for instance \cite{Chae-K,Chae-N, ES, G, LWZ, Tan, Xu, Y}. Specially, Chae and Nam \cite{Chae-N} studied local existence and uniqueness of the inviscid Boussinesq equation and some blow-up criterion in the Sobolev space, Yuan \cite{Y} and Liu et al. \cite{LWZ} in the Besov space, Chae and Kim \cite{Chae-K} and Cui et al. \cite{Cui} in the H{\"o}lder spaces, Xiang and Yan \cite{Xiang-Yan} in the Triebel-Lizorkin-Lorentz spaces. It was remarked that the global regularity for the inviscid Boussinesq equations even in two dimensions is a challenging open problem in mathematical fluid mechanics.

In this paper, we are concerned with the analyticity of smooth solutions of the inviscid Boussinesq equations \eqref{1.1}. The analyticity of the solution for Euler equations in the space variables, for analytic initial data is an important issue, studied in \cite{AM,B,BB,BBZ,KV1,KV2,LO}. In particular, Kukavica and Vicol \cite{KV1} studied the analyticity of solutions for the Euler equations and obtained that the radius of analyticity $\tau(t)$ of any analytic solution $u(t,x)$ has a lower bound
\begin{equation}\label{1.2}
\tau(t)\geq C(1+t)^{-2}\exp\left(-C_0\int_0^t \|\nabla u(s,\cdot)\|_{L^\infty}ds \right)
\end{equation}
for a constant $C_0>0$ depending on the dimension and $C>0$ depending on the norm of the initial datum in some finite order Sobolev space. The same authors in \cite{KV2} obtained a better lower bound for $\tau(t)$ for the Euler equations in a half space replacing $(1+t)^{-2}$ by $(1+t)^{-1}$ in \eqref{1.2}. In \cite{Cheng-X}, we have investigated the Gevrey analyticity of the smooth solution for the ideal MHD equations following the method of \cite{KV1}. The approach used in \cite{KV1,KV2,LO,Cheng-X} relies on the energy method in infinite order Gevrey-Sobolev spaces. Recently, Cappiello and Nicola \cite{CN} developed a new inductive method to simplify the proof of \cite{KV1,KV2}. In this paper, we shall apply this inductive method to study the analyticity of smooth solution for the inviscid Boussinesq equations. The main additional difficulty arise from the estimate of the weak coupling term $u\cdot\nabla\theta$.

The paper is organized as follows. In Section \ref{Section 2}, we will give some notations and state our main results. In Section \ref{Section 3}, we first recall some known results and then give some lemmas which are needed to prove the main Theorem. In Section \ref{Section 4}, we finish the proof of Theorem \ref{Theorem 2.1}.

\section{Notations and Main Theorem}\label{Section 2}

In this section we will give some notations and function spaces which will be used throughout the following arguments. Throughout the paper, $C$ denotes a generic constant which may vary from line to line. Since we work on the torus $\mathbb{T}^d$ throughout the paper, we shall write the function space $L^2$ or $H^k$ to represnt the functions that are squre integrable or squre integrable up to $k$-th derivative without mentioning the domain $\mathbb{T}^d$. 

Let $v=(v_1,\ldots,v_d)$ be a vector function, we say that $v\in L^2$ which means $v_i\in L^2$ for each $1\leq i\leq d$. We denote the $L^2$ norm of $v$ by $\|v\|_{L^2}=\sqrt{\sum_{1\leq i\leq d} \|v_i\|_{L^2}^2}$. Let $\rho$ be a scalar function, we say the pare $(v,\rho)\in L^2$ if $v,\rho\in L^2$. We denote the $L^2$ norm of the pare $(v,\rho)$ to be
\begin{equation}\nonumber
\|(v,\rho)\|_{L^2}=\sqrt{\|v\|_{L^2}^2+\|\rho\|_{L^2}^2}.
\end{equation}
Denote $\langle\cdot,\cdot\rangle$ to be the inner product in $L^2$ either for vector function or scalar function.

In \cite{LM}, it is stated that a smooth function $f$ is uniformly analytic in $\mathbb{T}^d$ if there exist $M, \tau>0$ such that
\begin{equation}\label{2.1}
\|\partial^\alpha f\|_{L^\infty}\leq M\frac{|\alpha|!}{\tau^{|\alpha|}},
\end{equation}
for all multi-indices $\alpha=(\alpha_1,\ldots,\alpha_d)\in \mathbb{N}_0^d$, where $|\alpha|=\alpha_1+\ldots+\alpha_d$. The supremum of the constant $\tau>0$ in \eqref{2.1} is called the radius of the analyticity of $f$. Notice that we can also replace the $L^\infty$ norm with a Sobolev norm $H^k, k\geq0$.

Let $n\geq0$ be an integer and $\alpha,\beta\in \mathbb{N}^d$ be multi-indices, then the sequence
\begin{equation}\nonumber
M_n=\frac{n!}{(n+1)^2}
\end{equation}
satisfies
\begin{equation}\label{2.1+}
\sum_{\beta<\alpha}{\alpha\choose\beta}M_{|\alpha-\beta|}M_{|\beta|+1}\leq C|\alpha|M_{|\alpha|},
\end{equation}
for all multi-index $\alpha,\beta\in\mathbb{N}^d$ and some universal constant $C$, for proof please refer to \cite{AM}.

With these notations, we can state our main results.

\begin{theorem}\label{Theorem 2.1}
Let $k>\frac{d}{2}+1$, $(u_0,\theta_0)$ be analytic in $\mathbb{T}^d$, satisfying $\divg u_0=0$ and 
\begin{equation}\label{2.2}
\|\partial^\alpha (u_0,\theta_0) \|_{H^k}\leq BA^{|\alpha|-1}|\alpha|!/(|\alpha|+1)^2, \quad \alpha\in\mathbb{N}^d,
\end{equation}
for some $B\geq \frac{9}{4}\|(u_0,\theta_0)\|_{H^{2k+1}}$ and $A\geq1$. Let $\big(u(t,x),\theta(t,x)\big)$ be the corresponding $H^k$ maximal solution of the inviscid Boussinesq equations \eqref{1.1}, with the initial datum $(u_0,\theta_0)$. Then $\big(u(t,x),\theta(t,x)\big)$ is analytic, and there exists constants $C_0, C_1>0$, depending only on $k$ and $d$, such that the radius of analyticity satisfying
\begin{equation}\label{2.3}
\tau(t)\geq \frac{1}{A(1+C_1Bt)}\exp\left(-C_0\int_0^t \big(1+\|\nabla u(s,\cdot)\|_{L^\infty}+\|\nabla \theta(s,\cdot)\|_{L^\infty}\big)ds \right).
\end{equation}
\end{theorem}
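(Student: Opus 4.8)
The plan is to prove the analyticity estimate \eqref{2.2} propagates in time by an induction on the order $|\alpha|$ of the derivatives, following the inductive scheme of Cappiello--Nicola \cite{CN}. Concretely, I would seek to show that there exist time-dependent quantities $A(t)\geq 1$ and $B(t)>0$, with $A(0)=A$, $B(0)=B$, such that the maximal $H^k$-solution satisfies
\begin{equation}\label{induction-ansatz}
\|\partial^\alpha (u(t),\theta(t))\|_{H^k}\leq B(t)A(t)^{|\alpha|-1}\frac{|\alpha|!}{(|\alpha|+1)^2},\qquad \alpha\in\mathbb{N}^d,
\end{equation}
for all $t$ in the existence interval. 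The base cases $|\alpha|=0$ and $|\alpha|=1$ are handled by the standard $H^k$ energy estimates for \eqref{1.1} (together with $k>\frac d2+1$, which gives the algebra and Moser-type estimates on $H^k$), and the initial smallness $B\geq\frac94\|(u_0,\theta_0)\|_{H^{2k+1}}$ is exactly what is needed to start the induction with some room to spare. Once \eqref{induction-ansatz} is established, the radius of analyticity at time $t$ is bounded below by $\sim 1/A(t)$ (from the definition \eqref{2.1} with the $H^k$ norm in place of $L^\infty$, which is allowed as remarked after \eqref{2.1}), so the whole problem reduces to controlling the growth of $A(t)$ and $B(t)$.

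The heart of the matter is the inductive step. Fix $\alpha$ with $|\alpha|=n\geq 2$ and assume \eqref{induction-ansatz} holds for all lower-order multi-indices. Apply $\partial^\alpha$ to the equations \eqref{1.1}, take the $H^k$ inner product with $\partial^\alpha u$ and $\partial^\alpha\theta$ respectively, and add. The pressure term drops by $\divg u=0$ after projecting onto divergence-free fields (or by the usual integration by parts), and the buoyancy term $\partial^\alpha(\theta e_d)$ is linear and harmless. The commutator terms $[\partial^\alpha,u\cdot\nabla]u$ and $[\partial^\alpha,u\cdot\nabla]\theta$ are expanded by the Leibniz rule; the top-order pieces (where all derivatives hit $u$ in the transport operator, or where $\partial^\alpha$ hits the transported quantity) are absorbed using $\|\nabla u\|_{L^\infty}$ and $\|\nabla\theta\|_{L^\infty}$ — this is where those two $L^\infty$ norms enter the exponential in \eqref{2.3} — while the genuinely intermediate terms $\sum_{0<\beta<\alpha}\binom{\alpha}{\beta}\partial^\beta u\cdot\nabla\partial^{\alpha-\beta}(\cdot)$ are estimated in $H^k$ using the induction hypothesis on each factor, which produces precisely a sum of the shape $\sum_{\beta<\alpha}\binom{\alpha}{\beta}M_{|\alpha-\beta|}M_{|\beta|+1}$. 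Invoking \eqref{2.1+}, this sum is bounded by $C|\alpha|M_{|\alpha|}=C|\alpha|\cdot|\alpha|!/(|\alpha|+1)^2$, with a prefactor proportional to $B(t)^2A(t)^{|\alpha|-1}$ (one power of $B$ and $A^{|\beta|-1}$ from one factor, the derivative-free $H^k$ norm $\lesssim B$ from the other after Sobolev embedding on the lower-order factor). Dividing by $A(t)^{|\alpha|-1}|\alpha|!/(|\alpha|+1)^2$ and collecting, one arrives at a differential inequality for $y_n(t):=\|\partial^\alpha(u,\theta)\|_{H^k}(|\alpha|+1)^2/(A(t)^{|\alpha|-1}|\alpha|!)$ of the form
\begin{equation}\nonumber
\frac{d}{dt}y_n \le \Big(C_0(1+\|\nabla u\|_{L^\infty}+\|\nabla\theta\|_{L^\infty}) - (|\alpha|-1)\frac{\dot A}{A}\Big)y_n + C_1 B\, y_n,
\end{equation}
up to the harmless structure that the quadratic term reproduces $y_n$ times a bounded factor. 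Choosing $A(t)=A\exp\big(C_0\int_0^t(1+\|\nabla u\|_{L^\infty}+\|\nabla\theta\|_{L^\infty})\,ds\big)(1+C_1Bt)$ makes the coefficient of $y_n$ on the right nonpositive (the $(|\alpha|-1)\dot A/A$ term beats the others once $|\alpha|\geq 2$, using $1+C_1Bt$ to kill the leftover $C_1B$ and the exponential factor to kill the $L^\infty$ part), so $y_n(t)\le y_n(0)\le B$, i.e. \eqref{induction-ansatz} propagates with the \emph{same} $B$. This closes the induction, and reading off $\tau(t)\gtrsim 1/A(t)$ gives exactly \eqref{2.3}.

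The main obstacle, as the authors themselves flag in the introduction, is the weak coupling term $u\cdot\nabla\theta$: unlike the Euler case there is no cancellation that removes it, and $\theta$ solves only a transport equation with no smoothing, so one cannot trade regularity between $u$ and $\theta$. The delicate point is therefore to carry the estimates for $u$ and $\theta$ \emph{simultaneously} in the combined norm $\|(u,\theta)\|_{H^k}$, so that the intermediate-derivative sums coming from both $[\partial^\alpha,u\cdot\nabla]u$ and $[\partial^\alpha,u\cdot\nabla]\theta$ feed back into the \emph{same} induction hypothesis \eqref{induction-ansatz}; the required Sobolev product estimates on $\mathbb{T}^d$ for $k>\frac d2+1$, the combinatorial bound \eqref{2.1+}, and the bookkeeping of the $(|\alpha|+1)^{-2}$ weights all have to be compatible with this coupling. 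A secondary technical point is justifying that the formal a priori estimate \eqref{induction-ansatz} is legitimate on the whole maximal $H^k$-existence interval — i.e. that analyticity does not force early blow-up — which follows from a continuation/bootstrap argument together with the $H^k$ blow-up criterion for \eqref{1.1}.
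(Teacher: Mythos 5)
Your strategy is the same as the paper's (the inductive scheme of Cappiello--Nicola: Leibniz expansion of the commutators, absorption of the top-order pieces into $\|\nabla u\|_{L^\infty}+\|\nabla\theta\|_{L^\infty}$, the combinatorial bound \eqref{2.1+} for the intermediate pieces, then a level-by-level Gronwall argument), but the step where you close the induction has a genuine gap. After normalizing by $A(t)^{|\alpha|-1}M_{|\alpha|}$ with your choice $A(t)=A(1+C_1Bt)\exp\big(C_0\int_0^t(1+\|\nabla u\|_{L^\infty}+\|\nabla\theta\|_{L^\infty})\,ds\big)$, the contribution of the genuinely intermediate terms is \emph{not} of the form $C_1B\,y_n$: both factors sit at strictly lower levels, so under your inductive hypothesis (with constant $B$) this contribution is an inhomogeneous forcing of size about $CnB^2$, independent of $y_n$, while the extra damping produced by the factor $(1+C_1Bt)$ inside $A(t)$ is only $(n-1)C_1B/(1+C_1Bt)$, which decays in time. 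Integrating
\begin{equation}\nonumber
\frac{d}{dt}y_n \le -(n-1)\frac{C_1B}{1+C_1Bt}\,y_n + CnB^2
\end{equation}
gives $y_n(t)\le B(1+C_1Bt)^{-(n-1)}+\frac{C}{C_1}B(1+C_1Bt)$, which is compatible with $y_n(t)\le B$ only for $t\lesssim 1/(CB)$. So the assertion that the bound ``propagates with the same $B$'' does not close on the whole maximal interval, which is exactly what \eqref{2.3} requires; nor can you repair it by letting $B(t)$ grow, since the forcing carries a factor $n$ that only the damping term can match, and matching it forces $B(t)\lesssim B/(1+C_1Bt)$, i.e.\ a \emph{decaying} prefactor.

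That decaying prefactor is precisely the paper's bookkeeping in its claim \eqref{4.1}: at level $N$ the bound allowed is $2BA^{N-1}(1+C_1Bt)^{N-2}\exp\big(C_0(N-1)\int_0^t(\cdots)ds\big)$, one power of $(1+C_1Bt)$ less than $A(t)^{N-1}$ would give. Then the product of two lower-level bounds carries $(1+C_1Bt)^{N-3}$, and $\int_0^t(1+C_1Bs)^{N-3}ds\le \frac{(1+C_1Bt)^{N-2}}{C_1B(N-2)}$ fits under the target after choosing $C_1=3C$ and $N\ge3$, the initial term $\mathcal{E}_N[(u,\theta)(0)]\le BA^{N-1}$ supplying the factor $2$. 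Two further points your sketch glosses over: the induction starts at $|\alpha|=2$ (not $|\alpha|=0,1$), and this base case is where $B\ge\frac94\|(u_0,\theta_0)\|_{H^{2k+1}}$ enters via Lemma \ref{lemma 3.2} (since $M_2=2/9$); and in the commutator there are terms where the extra Sobolev derivatives $\partial^\gamma$, $|\gamma|\le k$, fall on the low-order factor (the terms $\mathcal{R}_{31},\mathcal{R}_{32},\mathcal{R}_{33}$ of the paper), which are controlled by $\|(u_0,\theta_0)\|_{H^{2k}}$-type norms through Lemma \ref{lemma 3.2} --- a second use of the hypothesis on $B$ that your argument does not account for.
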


\begin{remark}
In the case $\theta=0$, Theorem \ref{Theorem 2.1} recovers the result of Kukavica and Vicol \cite{KV1} and Cappiello and Nicola \cite{CN} for the incompressible Euler equation.
\end{remark}

\begin{remark}
When the dimension $d=2$, the blow-up criterion proved by Chae and Nam in \cite{Chae-N} stated that the solution remains smooth up to $T$ as long as $\int_0^T \|\nabla\theta(\cdot,s)\|_{L^\infty} ds<\infty$. So, it will be very interesting if the quantity $\int_0^t \big(1+\|\nabla u(\cdot,s)\|_{L^\infty}+\|\nabla\theta(\cdot,s)\|_{L^\infty} \big)ds$ in the lower bound of the radius of analytic solution in \eqref{2.3} can be replaced by $\int_0^t \|\nabla\theta(\cdot,s)\|_{L^\infty}ds$.
\end{remark}

\section{The estimate of the Sobolev norm}\label{Section 3}
In order to prove the main Theorem \ref{Theorem 2.1}, we recall the following results about the local existence and uniqueness of $H^k$-solution of the inviscid Boussinesq equations \eqref{1.1} which is a proposition in \cite{WX} for $d=3$.
\begin{theorem}[Wang-Xie, Proposition 1.2 of \cite{WX}]
If $\big(u_0(x),\theta_0(x) \big)\in H^3(\Omega)$ and $u_0(x)$ satisfies the divergence-free condition, then there exists $T_2>0$ such that the inviscid problem \eqref{1.1} admits a unique solution
\begin{equation}\nonumber
\big(u,\theta\big)\in C\big(0,T_2;H^3(\Omega)\big)\cap C^1\big(0,T_2;H^2(\Omega) \big).
\end{equation}
\end{theorem}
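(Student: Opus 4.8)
The plan is to prove this by the standard energy method for Euler-type quasilinear systems, adapted to the temperature coupling. First I would eliminate the pressure by applying the Leray--Helmholtz projection $\mathbb{P}$ onto divergence-free fields: since $\divg u=0$, the momentum equation becomes
\begin{equation}\nonumber
\partial_t u + \mathbb{P}\big[(u\cdot\nabla)u\big] = \mathbb{P}\big[\theta e_d\big], \qquad \partial_t\theta + (u\cdot\nabla)\theta = 0,
\end{equation}
with $\divg u_0=0$ propagated in time; equivalently $p$ is recovered from the elliptic problem $\Delta p = \partial_d\theta - \sum_{i,j}\partial_i u_j\,\partial_j u_i$. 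Because $\mathbb{P}$ is bounded on every $H^s$, the goal reduces to a closed system for $(u,\theta)$, and the object to establish is the a priori estimate
\begin{equation}\nonumber
\frac{d}{dt}\big(\|u\|_{H^3}^2+\|\theta\|_{H^3}^2\big) \le C\big(1+\|u\|_{H^3}+\|\theta\|_{H^3}\big)\big(\|u\|_{H^3}^2+\|\theta\|_{H^3}^2\big),
\end{equation}
which integrates to a lifespan $T_2$ depending only on $\|(u_0,\theta_0)\|_{H^3}$.

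Second, to construct solutions I would regularize the nonlinearities by a Friedrichs mollifier $J_\ep$, turning the system into an ODE $\frac{d}{dt}(\eps u,\eps\theta)=F_\ep(\eps u,\eps\theta)$ in the Banach space $H^3$ with locally Lipschitz right-hand side; the Cauchy--Lipschitz (Picard) theorem then yields approximate solutions on a short interval. The crucial point is that the displayed $H^3$ estimate holds uniformly in $\ep$: applying $\partial^\alpha$ for $|\alpha|\le 3$, pairing with $\partial^\alpha u$ and $\partial^\alpha\theta$, and invoking the Kato--Ponce commutator estimate to bound $[\partial^\alpha,u\cdot\nabla]u$ and $[\partial^\alpha,u\cdot\nabla]\theta$ in $L^2$ by $C\|\nabla u\|_{L^\infty}\big(\|u\|_{H^3}+\|\theta\|_{H^3}\big)$, where the embedding $H^2\hookrightarrow L^\infty$ (valid since $3>\tfrac d2+1$ in $d=3$) gives $\|\nabla u\|_{L^\infty}\lesssim\|u\|_{H^3}$. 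The coupling contributes only $\psca{\partial^\alpha(\theta e_d),\partial^\alpha u}\le\|\theta\|_{H^3}\|u\|_{H^3}$, which is lower order and harmless.

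Third, with uniform bounds $\|(\eps u,\eps\theta)\|_{H^3}\le C$ on $[0,T_2]$, the equations furnish a uniform bound on $\partial_t(\eps u,\eps\theta)$ in $H^2$; the Aubin--Lions--Simon lemma then extracts a subsequence converging strongly in $C\big([0,T_2];H^{s'}\big)$ for every $s'<3$, enough to pass to the limit in the quadratic nonlinearities and produce a solution $(u,\theta)\in L^\infty\big(0,T_2;H^3\big)$. Uniqueness I would obtain from a low-regularity estimate: the difference $(\delta u,\delta\theta)$ of two solutions satisfies $\frac{d}{dt}\|(\delta u,\delta\theta)\|_{L^2}^2\le C\big(1+\|u\|_{H^3}+\|\theta\|_{H^3}\big)\|(\delta u,\delta\theta)\|_{L^2}^2$, and Gr\"onwall forces $\delta u=\delta\theta=0$. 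The stated time regularity $C\big(0,T_2;H^3\big)\cap C^1\big(0,T_2;H^2\big)$ then follows by upgrading weak-$H^3$ continuity to strong continuity through continuity of $t\mapsto\|(u,\theta)(t)\|_{H^3}$, and by reading $\partial_t u,\partial_t\theta$ directly off the projected equations, which land in $H^2$.

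The hard part will be the uniform a priori estimate, specifically the loss of one derivative in the advection terms $(u\cdot\nabla)u$ and $(u\cdot\nabla)\theta$. Since the inviscid system has no smoothing, this cannot be absorbed by dissipation and must be handled by the commutator structure: the top-order contribution $\psca{u\cdot\nabla\partial^\alpha f,\partial^\alpha f}$ vanishes after integration by parts using $\divg u=0$, leaving only genuinely order-$|\alpha|$ commutators that Kato--Ponce controls. I would also need to check that the mollified scheme preserves both the divergence-free constraint and this cancellation, so that the bound is truly $\ep$-uniform; this, rather than the benign source $\theta e_d$, is the real technical core.
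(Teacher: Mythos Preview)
The paper does not actually prove this statement: it is quoted as Proposition~1.2 of \cite{WX}, and the authors write ``The proof is due to the argument in \cite{Chae-I} and \cite{LY}. Since it is standard, we omit the details here.'' So there is no in-paper proof to compare against; the paper simply defers to the literature on quasilinear hyperbolic systems.

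Your outline is a correct and standard route to local well-posedness for Euler-type systems in $H^s$ with $s>\tfrac{d}{2}+1$: Leray projection to eliminate the pressure, Friedrichs mollification to set up an ODE in $H^3$, uniform a~priori bounds via the Kato--Ponce commutator estimate together with the cancellation $\langle u\cdot\nabla\partial^\alpha f,\partial^\alpha f\rangle=0$ from $\divg u=0$, compactness by Aubin--Lions, and an $L^2$ stability estimate for uniqueness. The references the paper invokes (\cite{LY} on boundary value problems for quasilinear hyperbolic systems, and \cite{Chae-I}) package the argument through symmetric-hyperbolic theory rather than the mollifier scheme you describe, but the analytic content---a closed $H^3$ energy inequality driven by the transport structure and Sobolev embedding, then Gr\"onwall---is the same. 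Your version is in fact closer in spirit to the treatment in Majda--Bertozzi \cite{MB}, which the paper also cites. There is no gap in your plan; the only care point you correctly flag is preserving the divergence-free cancellation under mollification, which is handled by projecting after mollifying (i.e.\ working with $\mathbb{P}J_\ep$), and this is routine on the torus.
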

The domain considered in \cite{WX} is a boundaed domain with smooth boundary conditions and this case can be naturally extended to periodic domain with periodic boundary conditions.
The proof is due to the argument in \cite{Chae-I} and \cite{LY}. Since it is standard, we omit the details here.
When the dimension $d=2$, Chae and Nam \cite{Chae-N} also proved the local existence and blow-up criterion.

In order to prove the main Theorem, we will need the following Lemma.
\begin{lemma}\label{lemma 3.2}
Let $d=2,3,$ and $k>\frac{d}{2}+1$ be fixed. Let $(u,\theta )\in C(0, T_1; H^k)$ be the corresponding maximal $H^k$-solution of \eqref{1.1} with intial data $(u_0,\theta_0)\in H^k$ and $u_0$ satisfies the divergence-free condition and periodic boundary condition, then $\forall\ 0\leq t< T_1$,
\begin{equation}\label{3.2}
\begin{aligned}
&\|\big( u(t,\cdot),\theta(t,\cdot)\big)\|_{H^k}\leq  \|(u_0,\theta_0) \|_{H^k} \\
&\qquad\times\exp {\left( C_0 \int_0^t \big(1+ \|\nabla u(s,\cdot)\|_{L^\infty}+\|\nabla\theta(s,\cdot)\|_{L^\infty} \big) ds \right)},
\end{aligned}
\end{equation}
where the constant $C_0$ depending on the dimension and $k$.
\end{lemma}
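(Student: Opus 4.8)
The plan is to establish the $H^k$ bound \eqref{3.2} via a standard energy estimate on the highest-order derivatives, combined with Gr\"onwall's inequality. The central tool is the commutator (Kato–Ponce) estimate: for the Sobolev space $H^k$ with $k>\frac d2+1$, one has
\[
\big|\langle \Lambda^k(u\cdot\nabla f),\Lambda^k f\rangle - \langle u\cdot\nabla \Lambda^k f,\Lambda^k f\rangle\big| \le C\big(\|\nabla u\|_{L^\infty}\|f\|_{H^k}^2 + \|\nabla f\|_{L^\infty}\|u\|_{H^k}\|f\|_{H^k}\big),
\]
where $\Lambda = (I-\Delta)^{1/2}$ (or one may work with $\partial^\alpha$, $|\alpha|\le k$, directly). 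The divergence-free condition kills the transport term $\langle u\cdot\nabla \Lambda^k f,\Lambda^k f\rangle = 0$ after integration by parts, so only the commutator survives.

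\emph{Step 1: energy estimate for the velocity.} Apply $\Lambda^k$ to the momentum equation, pair with $\Lambda^k u$ in $L^2$, and integrate by parts. The pressure term drops because $\divg u = 0$ (so $\langle\Lambda^k\nabla p,\Lambda^k u\rangle = 0$), the transport term drops as above, and we are left with
\[
\tfrac12\tfrac{d}{dt}\|u\|_{H^k}^2 \le C\|\nabla u\|_{L^\infty}\|u\|_{H^k}^2 + |\langle\Lambda^k(\theta e_d),\Lambda^k u\rangle| \le C\|\nabla u\|_{L^\infty}\|u\|_{H^k}^2 + \|\theta\|_{H^k}\|u\|_{H^k}.
\]

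\emph{Step 2: energy estimate for the density.} Apply $\Lambda^k$ to the temperature equation $\partial_t\theta + u\cdot\nabla\theta = 0$, pair with $\Lambda^k\theta$. Again the transport term vanishes by $\divg u = 0$, and the commutator estimate gives
\[
\tfrac12\tfrac{d}{dt}\|\theta\|_{H^k}^2 \le C\big(\|\nabla u\|_{L^\infty}\|\theta\|_{H^k}^2 + \|\nabla\theta\|_{L^\infty}\|u\|_{H^k}\|\theta\|_{H^k}\big).
\]
\emph{Step 3: combine and apply Gr\"onwall.} Adding the two estimates and writing $Y(t) = \|u\|_{H^k}^2 + \|\theta\|_{H^k}^2 = \|(u,\theta)\|_{H^k}^2$, one bounds every term on the right by $C\big(1 + \|\nabla u\|_{L^\infty} + \|\nabla\theta\|_{L^\infty}\big)Y(t)$, using Young's inequality to absorb the mixed products $\|\theta\|_{H^k}\|u\|_{H^k}$ and the ``$1$'' to dominate the bare term $\|\theta\|_{H^k}\|u\|_{H^k}$ coming from the buoyancy force. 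Thus $\frac{d}{dt}Y \le 2C_0\big(1+\|\nabla u\|_{L^\infty}+\|\nabla\theta\|_{L^\infty}\big)Y$, and integrating yields $Y(t)\le Y(0)\exp\big(2C_0\int_0^t(1+\|\nabla u\|_{L^\infty}+\|\nabla\theta\|_{L^\infty})\,ds\big)$; taking square roots gives \eqref{3.2}.

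\emph{Main obstacle.} The delicate point, flagged by the authors in the introduction as ``the estimate of the weak coupling term $u\cdot\nabla\theta$,'' is the commutator term in Step 2: unlike the velocity equation, the density equation has no pressure and no natural cancellation beyond the transport structure, so the factor $\|\nabla\theta\|_{L^\infty}\|u\|_{H^k}\|\theta\|_{H^k}$ genuinely requires both $\|\nabla\theta\|_{L^\infty}$ and the full coupled energy $Y$; this is precisely why $\|\nabla\theta\|_{L^\infty}$ must appear in the exponent in \eqref{3.2} (and ultimately in \eqref{2.3}). One should also double-check that the regularity threshold $k>\frac d2+1$ is exactly what is needed for the Sobolev embedding $H^{k-1}\hookrightarrow L^\infty$ to control the lower-order factors in the Kato–Ponce estimate, and that all constants depend only on $d$ and $k$ as claimed.
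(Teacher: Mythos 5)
Your proposal is correct and takes essentially the same route as the paper: an energy estimate on derivatives up to order $k$, with the pressure and transport terms removed by the divergence-free condition, the nonlinear terms controlled by the Kato--Ponce/Majda--Bertozzi commutator estimate, and the buoyancy term $\|\theta\|_{H^k}\|u\|_{H^k}$ absorbed via the ``$1$'' in the integrand, yielding $\frac{d}{dt}\|(u,\theta)\|_{H^k}^2\leq C\big(1+\|\nabla u\|_{L^\infty}+\|\nabla\theta\|_{L^\infty}\big)\|(u,\theta)\|_{H^k}^2$ and then Gr\"onwall. The only cosmetic difference is your use of $\Lambda^k$ in place of the paper's summation over $\partial^\alpha$, $|\alpha|\leq k$, which changes nothing essential.
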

\begin{proof}
Since the initial data $(u_0,\theta_0)\in H^k$ satisfies $\nabla\cdot u_0=0$ and the periodic boundary conditions, the local existence of the $H^k$-solution $(u,\theta)$ is already known. We here only need to show the $H^k$ energy estimate \eqref{3.2}.

Let $\alpha\in\mathbb{N}^d$ satisfies $0\leq |\alpha|=\alpha_1+\alpha_2+\ldots+\alpha_d\leq k$. We first apply the $\partial^\alpha$ on both sides of the first equation of \eqref{1.1} and then take the $L^2$-inner product with $\partial^\alpha u$ with both sides, which gives
\begin{equation}\label{3.3}
\frac{1}{2}\frac{d}{dt}\|\partial^\alpha u\|_{L^2}^2+\langle\partial^\alpha(u\cdot\nabla u),\partial^\alpha u\rangle=\langle\partial^\alpha(\theta e_d),\partial^\alpha u\rangle,
\end{equation}
where the pressure term $\langle\nabla \partial^\alpha p,\partial u\rangle=0$ is due to the fact $u$ is divergence free and the domain considered here is a periodic domain.

We then apply the $\partial^\alpha$ on both sides of the second equation of \eqref{1.1} and take the $L^2$ inner product with $\partial^\alpha \theta$ on both sides, which gives
\begin{equation}\label{3.4}
\frac{1}{2}\frac{d}{dt}\|\partial^\alpha \theta\|_{L^2}^2+\langle\partial^\alpha(u\cdot\nabla \theta),\partial^\alpha\theta\rangle=0.
\end{equation}

Now adding \eqref{3.3} with \eqref{3.4} and taking summation over $0\leq|\alpha|\leq k$, we can obtain
\begin{equation}\label{3.5}
\begin{aligned}
\frac{1}{2}\frac{d}{dt}\|(u,\theta) \|_{H^k}^2 = &\sum_{0\leq |\alpha|\leq k}\langle\partial^\alpha(\theta e_d),\partial^\alpha u\rangle\\
&-\sum_{0\leq |\alpha|\leq k}\langle\partial^\alpha(u\cdot\nabla u),\partial^\alpha u\rangle-\sum_{0\leq |\alpha|\leq k}\langle\partial^\alpha(u\cdot\nabla \theta),\partial^\alpha \theta\rangle.
\end{aligned}
\end{equation}
The H\"older inequality implies that
\begin{equation}\label{3.6}
\big|\sum_{0\leq |\alpha|\leq k}\langle\partial^\alpha(\theta e_d),\partial^\alpha u\rangle\big|\leq \|\theta\|_{H^k}\|u\|_{H^k}.
\end{equation}
Notice that $u$ is divergence free, by use of the Sobolev inequality which can be found in \cite{MB} we can obtain
\begin{equation}\label{3.7}
\big| \sum_{0\leq |\alpha|\leq k}\langle\partial^\alpha(u\cdot\nabla u),\partial^\alpha u\rangle \big| \leq C\|\nabla u\|_{L^\infty}\|u\|_{H^k}^2,
\end{equation}
where the constant $C$ depends on $k$ and the dimension $d$. In the same way, we can obtain
\begin{equation}\label{3.8}
\big| \sum_{0\leq |\alpha|\leq k}\langle\partial^\alpha(u\cdot\nabla \theta),\partial^\alpha \theta\rangle \big| \leq C\big( \|\nabla u\|_{L^\infty}\|u\|_{H^k}+\|\nabla\theta\|_{L^\infty}\|u\|_{H^k}\big)\|\theta\|_{H^k},
\end{equation}
where the constant $C$ also depends on $k$ and the space dimension $d$.
Substituting \eqref{3.6}, \eqref{3.7} and \eqref{3.8} into \eqref{3.5}, we obtain
\begin{equation}\label{3.9}
\begin{aligned}
\frac{1}{2}\frac{d}{dt}\|(u,\theta) \|_{H^k}^2 &\leq C\big(\|\nabla u\|_{L^\infty}+\|\nabla \theta\|_{L^\infty} \big)\|(u,\theta)\|_{H^k}^2+C\|(u,\theta)\|_{H^k}^2\\
&\leq C\big(1+\|\nabla u\|_{L^\infty}+\|\nabla \theta\|_{L^\infty} \big)\|(u,\theta)\|_{H^k}^2,
\end{aligned}
\end{equation}
where $C$ is some constant depending on $k,d$. By the Gronwall inequality, \eqref{3.2} is then proved.
\end{proof}

Lemma \ref{lemma 3.2} tells us that the solution for the inviscid Boussinesq equation has the same Sobolev regularity as the initial data. In the following Lemma we will show that if the initial data $(u_0,\theta_0)\in H^k$ for arbitrary $k\geq3$, there exists an interval $[0,T]$ uniformly with respect to $k$ such that the unique smooth solution $(u,\theta)\in L^\infty([0,T];H^k)$.

\begin{lemma}\label{lemma 3.3}
Let $(u,\theta)$ be the $H^3$-solution of the inviscid Boussinesq equation \eqref{1.1} on the time interval $[0,T]$, with initial data $(u_0,\theta_0)\in H^3$. Then for all $k\geq3$, if $(u_0,\theta_0)\in H^k$, the corresponding solution $(u,\theta)$ satisfies
\begin{equation}\nonumber
(u,\theta)\in L^\infty\big([0,T],H^k \big)\quad \text{for all}\quad k\geq3.
\end{equation}
\end{lemma}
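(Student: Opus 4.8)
The plan is to run a continuation (bootstrap) argument: the $H^3$ solution exists on a fixed interval $[0,T]$, and along that interval the quantity $\int_0^t(1+\|\nabla u(s,\cdot)\|_{L^\infty}+\|\nabla\theta(s,\cdot)\|_{L^\infty})\,ds$ is finite because $\|\nabla u\|_{L^\infty}+\|\nabla\theta\|_{L^\infty}\lesssim \|(u,\theta)\|_{H^3}$ by Sobolev embedding (valid for $d=2,3$ since $3>d/2+1$). So set
\begin{equation}\nonumber
K:=\int_0^T\big(1+\|\nabla u(s,\cdot)\|_{L^\infty}+\|\nabla\theta(s,\cdot)\|_{L^\infty}\big)\,ds<\infty,
\end{equation}
a number that depends only on the $H^3$ solution and not on $k$. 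This is the uniform-in-$k$ control we need.

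Next I would argue by induction on $k\geq 3$. The base case $k=3$ is the hypothesis. Assume $(u,\theta)\in L^\infty([0,T];H^{k-1})$ with $k\geq 4$ and $(u_0,\theta_0)\in H^k$. Since the solution is already known to be the smooth (indeed $H^{k-1}$, hence classical for the relevant range) solution on $[0,T]$, the energy identities \eqref{3.3}--\eqref{3.5} are justified for all $|\alpha|\le k$, and the estimates \eqref{3.6}--\eqref{3.8} give, exactly as in the proof of Lemma \ref{lemma 3.2},
\begin{equation}\nonumber
\frac{d}{dt}\|(u,\theta)(t,\cdot)\|_{H^k}^2\leq C_0\big(1+\|\nabla u(t,\cdot)\|_{L^\infty}+\|\nabla\theta(t,\cdot)\|_{L^\infty}\big)\|(u,\theta)(t,\cdot)\|_{H^k}^2,
\end{equation}
with $C_0$ depending only on $d$ and $k$. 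Gronwall's inequality then yields $\|(u,\theta)(t,\cdot)\|_{H^k}\leq \|(u_0,\theta_0)\|_{H^k}\,e^{C_0 K}$ for all $t\in[0,T]$, so $(u,\theta)\in L^\infty([0,T];H^k)$, closing the induction. (One could also bypass the induction and apply the Gronwall estimate of Lemma \ref{lemma 3.2} directly at level $k$, the point being simply that the integral $K$ in the exponent is the one generated by the fixed $H^3$ solution.)

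The one genuine subtlety — the main obstacle — is justifying a priori that the $H^3$ solution actually lies in $H^k$ on the full interval $[0,T]$, i.e.\ that the differentiated energy estimate is performed on a function for which both sides are finite. This is handled in the standard way: by the local existence theory (the Wang--Xie type result quoted above, applied at regularity $k$), the $H^k$ initial datum produces an $H^k$ solution on some maximal interval $[0,T_k^*)$; by uniqueness this coincides with $(u,\theta)$ where both are defined; and the Gronwall bound above shows $\|(u,\theta)(t,\cdot)\|_{H^k}$ cannot blow up before $T$ since $K<\infty$, so $T_k^*>T$ and the $H^k$ norm stays bounded by $\|(u_0,\theta_0)\|_{H^k}e^{C_0K}$ on $[0,T]$. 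This is exactly the mechanism that makes $T$ independent of $k$: the blow-up criterion for $H^k$ is controlled by the $L^\infty$ norm of the first derivatives, which is in turn controlled by the fixed $H^3$ solution. A brief remark to this effect, together with the reference to Lemma \ref{lemma 3.2} for the repeated estimate, completes the proof.
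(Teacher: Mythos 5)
Your proposal is correct and follows essentially the same route as the paper: the $H^k$ energy inequality of Lemma \ref{lemma 3.2} plus Gronwall, with the exponential factor controlled uniformly in $k$ because $\|\nabla u\|_{L^\infty}+\|\nabla\theta\|_{L^\infty}$ is bounded by the $H^3$ norm of the fixed solution on $[0,T]$ via Sobolev embedding. The only difference is cosmetic: the paper phrases this as an induction on the Sobolev index while you apply the estimate directly at level $k$, and you additionally spell out the local-existence/uniqueness/continuation argument that turns the a priori bound into a rigorous proof, a point the paper leaves implicit.
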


\begin{proof}
We claim that for every $3\leq m\leq k$ there exist a constant $C_m$ such that
\begin{equation}\nonumber
\sup_{0\leq s\leq T}\|\big(u(\cdot,s),\theta(\cdot,s)\big)\|_{H^m}\leq C_m.
\end{equation}
For $m=3$ the statement follows from our assumption. Take $4\leq m\leq k$ and suppose that the statement is true for $m-1$, i. e.
\begin{equation}\nonumber
\sup_{0\leq s\leq T}\|\big(u(\cdot,s),\theta(\cdot,s)\big)\|_{H^{m-1}}\leq C_{m-1}.
\end{equation}
We take the $H^m$ inner product of the first equation of \eqref{1.1} with $u$ and take the $H^m$ inner product of  the second equation of \eqref{1.1} with $\theta$, which gives
\begin{equation}\nonumber
\begin{aligned}
\frac{1}{2}\frac{d}{dt}\|(u,\theta) \|_{H^m}^2 = &\sum_{0\leq |\alpha|\leq m}\langle\partial^\alpha(\theta e_d),\partial^\alpha u\rangle\\
&-\sum_{0\leq |\alpha|\leq m}\langle\partial^\alpha(u\cdot\nabla u),\partial^\alpha u\rangle-\sum_{0\leq |\alpha|\leq m}\langle\partial^\alpha(u\cdot\nabla \theta),\partial^\alpha \theta\rangle.
\end{aligned}
\end{equation}
For $4\leq m\leq k$, by \eqref{3.9} we obtain
\begin{equation}\nonumber
\frac{d}{dt}\|(u,\theta) \|_{H^m} \leq C\big(1+\|\nabla u\|_{L^\infty}+\|\nabla \theta\|_{L^\infty} \big)\|(u,\theta)\|_{H^m},
\end{equation}
where $C$ is a constant depending on $m,d$. Then the Gronwall inequality yields
\begin{equation}\label{3.10}
\begin{aligned}
&\|(u(\cdot,t),\theta(\cdot,t))\|_{H^m}\leq \|(u_0,\theta_0)\|_{H^m}\\
&\qquad\times\exp\bigg(C\int_0^t \big(1+\|\nabla u(\cdot,s)\|_{L^\infty}+\|\nabla \theta(\cdot,s)\|_{L^\infty} \big)ds \bigg).
\end{aligned}
\end{equation}
By Sobolev embedding inequality, we have
\begin{equation}\nonumber
\|\nabla u\|_{L^\infty}\leq C^\prime \|u\|_{H^3},\ \|\nabla \theta\|_{L^\infty}\leq C^\prime \|\theta\|_{H^3},
 \end{equation}
for some constant $C^\prime$. Then from \eqref{3.10} and the assumption, we have
\begin{equation}\nonumber
\begin{aligned}
\|(u(\cdot,t),\theta(\cdot,t))\|_{H^m} &\leq \|(u_0,\theta_0)\|_{H^m}\\
&\times\exp\bigg(C\int_0^t(1+C^\prime\|u(\cdot,s)\|_{H^3}+C^\prime\|\theta(\cdot,s)\|_{H^m})ds \bigg) \\
&\leq \|(u_0,\theta_0)\|_{H^m} \exp\bigg(C(1+2C^\prime C_3)t \bigg).
\end{aligned}
\end{equation}
So it easily follows that
\begin{equation}\nonumber
\sup_{0\leq s\leq T}\|\big(u(\cdot,s),\theta(\cdot,s)\big)\|_{H^m}\leq C_m,
\end{equation}
which proves the Lemma by induction.
\end{proof}
\begin{remark}
In Lemma \ref{lemma 3.3}, the uniform lifespan $[0,T]$ of the Sobolev solution is independent of the Sobolev order $k$ which allows us to take $k\to\infty$. In other words, if the initial datum $(u_0,\theta_0)$ is $C^\infty$, then solution $(u(t,x),\theta(t,x))$ is also $C^\infty$ for almost every $t\in [0,T]$.
\end{remark}

\section{Proof of Theorem \ref{Theorem 2.1}}\label{Section 4}
In this Section, we will give the proof of the main theorem.
\begin{proof}[Proof of Theorem \ref{Theorem 2.1}]
By Lemma \ref{lemma 3.3}, we know that the solution $(u,\theta)$ is smooth, since $(u_0,\theta_0)$ is. Now we claim that for all $|\alpha|=N>2$ we have
\begin{equation}\label{4.1}
\begin{aligned}
&\frac{\|(\partial^\alpha u(t,\cdot),\partial^\alpha \theta(t,\cdot))\|_{H^k}}{M_{|\alpha|}} \leq  2BA^{N-1}(1+C_1Bt)^{N-2} \\
&\qquad \times\exp\bigg(C_0(N-1)\int_0^t(1+\|\nabla u(s)\|_{L^\infty}+\|\nabla \theta(s)\|_{L^\infty})ds \bigg),
\end{aligned}
\end{equation}
where $C_0,C_1$ are positive constants depending only on $k$ and $d$.
We set
\begin{equation}\nonumber
\mathcal{E}_N[(u,\theta)(t)]=\sup_{|\alpha|=N} \frac{\|\partial^\alpha(u(t,\cdot),\theta(t,\cdot))\|_{H^k}}{M_{|\alpha|}}.
\end{equation}
To prove the claim, we proceed by induction on $N$. The result is true for $N=2$ by \eqref{3.2} with notice that $k+2<2k+1$ and $B\geq \frac{9}{4}\|(u_0,\theta_0)\|_{H^{2k+1}},A\geq1$. Hence, let $N\geq3$ and assume \eqref{4.1} holds for multi-indices $\alpha$ of length $2\leq |\alpha|\leq N-1$ and prove it for $|\alpha|=N$.

For $|\alpha|=N, |\gamma|\leq k$, we first apply $\partial^{\alpha+\gamma}$ on both sides of the first and the second equation of \eqref{1.1} and then take the $L^2$-inner product with $\partial^{\alpha+\gamma}u$ and $\partial^{\alpha+\gamma}\theta$ respectivily, which gives
\begin{equation}\label{4.2}
\begin{aligned}
\frac{1}{2}\frac{d}{dt}\|\partial^{\alpha+\gamma}(u,\theta)\|_{L^2}^2 +\langle\partial^{\alpha+\gamma}(u\cdot\nabla u),\partial^{\alpha+\gamma}u\rangle &+\langle\partial^{\alpha+\gamma}(u\cdot\nabla\theta),\partial^{\alpha+\gamma}\theta\rangle\\
&=\langle\partial^{\alpha+\gamma}(\theta e_d),\partial^{\alpha+\gamma}u\rangle.
\end{aligned}
\end{equation}
Denote $Lw=L_uw:=u\cdot\nabla w$ and
\begin{equation}\nonumber
\begin{aligned}
&\mathcal{I}_1=[\partial^{\alpha+\gamma},L]u:=\partial^{\alpha+\gamma}(u\cdot\nabla u)-u\cdot\nabla\partial^{\alpha+\gamma}u,\\
&\mathcal{I}_2=[\partial^{\alpha+\gamma},L]\theta:=\partial^{\alpha+\gamma}(u\cdot\nabla \theta)-u\cdot\nabla\partial^{\alpha+\gamma}\theta,\\
&\mathcal{I}_3=(\partial^{\alpha+\gamma}\theta) e_d.
\end{aligned}
\end{equation}
Taking summation with $0\leq|\gamma|\leq k$ in \eqref{4.2}, we have by the Cauchy-Schwartz inequality
\begin{equation}\label{4.4}
\frac{d}{dt}\|\partial^{\alpha}\big(u(t,\cdot),\theta(t,\cdot) \big)\|_{H^k}\leq \sum_{0\leq|\gamma|\leq k}\|\mathcal{I}_1\|_{L^2}+\sum_{0\leq|\gamma|\leq k}\|\mathcal{I}_2\|_{L^2}+\|\partial^\alpha(u,\theta)\|_{H^k},
\end{equation}
where we used the the standard argument in [pp. 47-48, \cite{R}].
It remains to estimate $\mathcal{I}_1$ and $\mathcal{I}_2$. Note that by the Leibniz rule, we can expand the expression of $\mathcal{I}_1$ as follows
\begin{equation}\nonumber
\begin{aligned}
\mathcal{I}_1 &=\sum_{\beta\leq \alpha}{\alpha\choose\beta}\partial^\gamma(\partial^{\alpha-\beta}u\cdot\nabla\partial^\beta u)\\
                     &=\sum_{\beta\leq \alpha}{\alpha\choose\beta}\sum_{\substack{\delta\leq\gamma,\\ |\beta|+|\delta|<|\alpha|+|\gamma|}}{\gamma\choose\delta}\partial^{\alpha-\beta+\gamma-\delta}u\cdot\nabla\partial^{\beta+\delta}u,
\end{aligned}
\end{equation}
where the restriction $|\beta|+|\delta|<|\alpha|+|\gamma|$ is due to the fact that $\langle u\cdot\nabla \partial^{\alpha+\gamma}u,\partial^{\alpha+\gamma}u \rangle=0$ because $u$ is divergence free.
By \cite{CN}, we have
\begin{equation}\nonumber
\begin{aligned}
\|\mathcal{I}_1\|_{L^2} &\leq \sum_{\beta\leq \alpha}{\alpha\choose\beta}\sum_{\substack{\delta\leq\gamma,\\ |\beta|+|\delta|<|\alpha|+|\gamma|}}{\gamma\choose\delta}\norm{\partial^{\alpha-\beta+\gamma-\delta}u\cdot\nabla\partial^{\beta+\delta}u}_{L^2}\\
                       &\leq C\bigg\{ NM_N\|\nabla u\|_{L^\infty}\mathcal{E}_N[(u,\theta)] +NM_NB^2A^{N-1}\\
                       &\times\exp\bigg[C_0(N-1)\int_0^t (1+\norm{\nabla u(s)}_{L^\infty}+\norm{\nabla \theta(s)}_{L^\infty})ds\bigg](1+C_1Bt)^{N-3} \bigg\}.
\end{aligned}
\end{equation}
We then follow the ideal of \cite{CN} to estimate $\mathcal{I}_2$. Similarly, we can expand $\mathcal{I}_2$ as
\begin{equation}\label{4.6}
\begin{aligned}
\mathcal{I}_2 &=\sum_{\beta\leq \alpha}{\alpha\choose\beta}\partial^\gamma(\partial^{\alpha-\beta}u\cdot\nabla\partial^\beta \theta)\\
                     &=\sum_{\beta\leq \alpha}{\alpha\choose\beta}\sum_{\substack{\delta\leq\gamma,\\ |\beta|+|\delta|<|\alpha|+|\gamma|}}{\gamma\choose\delta}\partial^{\alpha-\beta+\gamma-\delta}u\cdot\nabla\partial^{\beta+\delta}\theta.
\end{aligned}
\end{equation}
Then we divide the summation of the right of \eqref{4.6} into three parts
\begin{equation}\nonumber
\mathcal{I}_2=\mathcal{I}_{21}+\mathcal{I}_{22}+\mathcal{I}_{23},
\end{equation}
where
\begin{equation}\nonumber
\begin{aligned}
\mathcal{I}_{21} =\sum_{\substack{\beta\leq \alpha\\ 0\neq |\beta|\leq |\alpha|-2}}{\alpha\choose\beta}\sum_{\substack{\delta\leq\gamma,\\ |\beta|+|\delta|<|\alpha|+|\gamma|}}{\gamma\choose\delta}\partial^{\alpha-\beta+\gamma-\delta}u\cdot\nabla\partial^{\beta+\delta}\theta,
\end{aligned}
\end{equation}
\begin{equation}\nonumber
\begin{aligned}
\mathcal{I}_{22} = &\sum_{\beta= \alpha}{\alpha\choose\alpha}\sum_{|\delta|=|\gamma|-1}{\gamma\choose\delta}\partial^{\gamma-\delta}u\cdot\nabla\partial^{\alpha+\delta}\theta\\
 &+\sum_{|\beta|=|\alpha|-1}{\alpha\choose\beta}\sum_{\delta=\gamma}{\gamma\choose\gamma}\partial^{\alpha-\beta}u\cdot\nabla\partial^{\beta+\gamma}\theta\\
 &+\sum_{\beta=0}{\alpha\choose\beta}\sum_{\delta=0}{\gamma\choose\delta}\partial^{\alpha+\gamma}u\cdot\nabla \theta,
\end{aligned}
\end{equation}
and
\begin{equation}\nonumber
\begin{aligned}
\mathcal{I}_{23} = &\sum_{\beta= \alpha}{\alpha\choose\alpha}\sum_{|\delta|\leq|\gamma|-2}{\gamma\choose\delta}\partial^{\gamma-\delta}u\cdot\nabla\partial^{\alpha+\delta}\theta\\
 &+\sum_{|\beta|=|\alpha|-1}{\alpha\choose\beta}\sum_{|\delta|\leq |\gamma|-1}{\gamma\choose\gamma}\partial^{\alpha-\beta}u\cdot\nabla\partial^{\beta+\gamma}\theta\\
 &+\sum_{\beta=0}{\alpha\choose\beta}\sum_{\delta\neq0}{\gamma\choose\delta}\partial^{\alpha+\gamma}u\cdot\nabla \theta.
\end{aligned}
\end{equation}
Estimation of $\mathcal{I}_{21}$:
With the fact that $H^k$ is an algbra if $k>\frac{d}{2}+1$ and $|\gamma|\leq k$, we have
\begin{equation}\label{4.10}
\begin{aligned}
\|\mathcal{I}_{21}\|_{L^2} &\leq \sum_{\substack{\beta\leq \alpha\\ 0\neq |\beta|\leq |\alpha|-2}}{\alpha\choose\beta}\sum_{\substack{\delta\leq\gamma,\\ |\beta|+|\delta|<|\alpha|+|\gamma|}}{\gamma\choose\delta}\norm{\partial^{\alpha-\beta+\gamma-\delta}u\cdot\nabla\partial^{\beta+\delta}\theta}_{L^2}\\
                               &\leq C\sum_{\substack{\beta\leq \alpha\\ 0\neq |\beta|\leq |\alpha|-2}}{\alpha\choose\beta}\sum_{\substack{\delta\leq\gamma,\\ |\beta|+|\delta|<|\alpha|+|\gamma|}}{\gamma\choose\delta}\norm{\partial^{\alpha-\beta}u}_{H^k}\norm{\nabla\partial^{\beta}\theta}_{H^k}.
\end{aligned}
\end{equation}
Noting that $2\leq |\alpha-\beta|\leq N-1$ and $2\leq |\beta|+1\leq N-1$, the hypothesis \eqref{4.1} for $2\leq |\alpha|\leq N-1$ indicates that
\begin{equation}\label{4.11}
\begin{aligned}
 &\norm{\partial^{\alpha-\beta}u}_{H^k}\leq  M_{|\alpha-\beta|}2BA^{|\alpha-\beta|-1} \\
 &\quad\times\exp\big[C_0(|\alpha-\beta|-1)\int_0^t (1+\norm{\nabla u(s)}_{L^\infty}+\norm{\nabla \theta(s)}_{L^\infty})ds \big](1+C_1Bt)^{|\alpha-\beta|-2}
\end{aligned}
\end{equation}
and
\begin{equation}\label{4.12}
\begin{aligned}
&\norm{\nabla\partial^\beta\theta}_{H^k}\leq M_{|\beta|+1}2BA^{|\beta|}\\
&\quad\times \exp\big[C_0(|\beta|)\int_0^t (1+\norm{\nabla u(s)}_{L^\infty}+\norm{\nabla \theta(s)}_{L^\infty})ds \big](1+C_1Bt)^{|\beta|-1}.
\end{aligned}
\end{equation}
Substituting \eqref{4.11} and \eqref{4.12} into \eqref{4.10} and employing \eqref{2.1+}, we obtain
\begin{equation}\nonumber
\begin{aligned}
\|\mathcal{I}_{21}\|_{L^2} \leq & C\sum_{\substack{\beta\leq \alpha\\ 0\neq |\beta|\leq |\alpha|-2}}{\alpha\choose\beta}M_{|\alpha-\beta|}M_{|\beta|+1}B^2 A^{N-1} \\
&\quad\times \exp \big[C_0(N-1)\int_0^t (1+\norm{\nabla u(s)}_{L^\infty}+\norm{\nabla \theta(s)}_{L^\infty})ds \big](1+C_1Bt)^{N-3}\\
 \leq & CNM_{N}B^2 A^{N-1} \\
&\quad \times\exp \big[C_0(N-1)\int_0^t (1+\norm{\nabla u(s)}_{L^\infty}+\norm{\nabla \theta(s)}_{L^\infty})ds \big](1+C_1Bt)^{N-3}.
\end{aligned}
\end{equation}
Estimation of $\mathcal{I}_{22}$:
In a similar way, we rewrite $\mathcal{I}_{22}$ as
\begin{equation}\nonumber
\begin{aligned}
\mathcal{I}_{22} &=\sum_{\beta= \alpha}{\alpha\choose\alpha}\sum_{|\delta|=|\gamma|-1}{\gamma\choose\delta}\partial^{\gamma-\delta}u\cdot\nabla\partial^{\alpha+\delta}\theta\\
 &\quad+\sum_{|\beta|=|\alpha|-1}{\alpha\choose\beta}\sum_{\delta=\gamma}{\gamma\choose\gamma}\partial^{\alpha-\beta}u\cdot\nabla\partial^{\beta+\gamma}\theta\\
 &\quad+\sum_{\beta=0}{\alpha\choose\beta}\sum_{\delta=0}{\gamma\choose\delta}\partial^{\alpha+\gamma}u\cdot\nabla \theta\\
 &=\mathcal{R}_{21}+\mathcal{R}_{23}+\mathcal{R}_{23}.
\end{aligned}
\end{equation}
For $\mathcal{R}_{21}$, we obtain
\begin{equation}\label{4.16}
\begin{aligned}
\|\mathcal{R}_{21}\|_{L^2} &\leq \sum_{\beta= \alpha}{\alpha\choose\alpha}\sum_{|\delta|=|\gamma|-1}{\gamma\choose\delta}\|{\partial^{\gamma-\delta}u\cdot\nabla\partial^{\alpha+\delta}\theta}\|_{L^2}\\
                                  &\leq \sum_{\beta= \alpha}{\alpha\choose\alpha}\sum_{|\delta|=|\gamma|-1}{\gamma\choose\delta}\norm{\partial^{\gamma-\delta}u\cdot\nabla\partial^{\alpha+\delta}\theta}_{L^2}\\
                                 &\leq C\norm{\nabla u}_{L^\infty}\norm{\partial^\alpha\theta}_{H^k}.
\end{aligned}
\end{equation}
for some constant $C$ depending on $k$.

For $\mathcal{R}_{22}$, we have the following estimate
\begin{equation}\label{4.17}
\begin{aligned}
\mathcal{R}_{22} &\leq \sum_{|\beta|=|\alpha|-1}{\alpha\choose\beta}\sum_{\delta=\gamma}{\gamma\choose\gamma}\|{\partial^{\alpha-\beta}u\cdot\nabla\partial^{\beta+\gamma}\theta}\|_{L^2}\\
                                  &\leq \sum_{|\beta|=|\alpha|-1}N\sum_{\delta=\gamma}{\gamma\choose\gamma}\norm{\partial^{\alpha-\beta}u\cdot\nabla\partial^{\beta+\gamma}\theta}_{L^2}\\
 &\leq CdNM_N\norm{\nabla u}_{L^\infty}\mathcal{E}_{N}[\theta].
\end{aligned}
\end{equation}
Then for $\mathcal{R}_{23}$, we have
\begin{equation}\label{4.18}
\begin{aligned}
\|\mathcal{R}_{23}\|_{L^2} &\leq\sum_{\beta=0}{\alpha\choose\beta}\sum_{\delta=0}{\gamma\choose\delta}\|{\partial^{\alpha+\gamma}u\cdot\nabla \theta}\|_{L^2}\\
                                  &\leq \norm{\nabla \theta}_{L^\infty}\norm{\partial^\alpha u}_{H^k}.
\end{aligned}
\end{equation}
Summing up the estimates of \eqref{4.16}, \eqref{4.17} and \eqref{4.18}, we obtain
\begin{equation}\nonumber
\|\mathcal{I}_{22}\|_{L^2} \leq C\big( \|\nabla u\|_{L^\infty}\|\partial^\alpha\theta\|_{H^k}+\|\nabla \theta\|_{L^\infty}\|\partial^\alpha u\|_{H^k}+dNM_N\|\nabla u\|_{L^\infty}\mathcal{E}_N(\theta) \big).
\end{equation}
Estimate of $\mathcal{I}_{23}$:
We divide $\mathcal{I}_{23}$ as follows
\begin{equation}\nonumber
\begin{aligned}
\mathcal{I}_{23} &= \sum_{\beta= \alpha}{\alpha\choose\alpha}\sum_{|\delta|\leq|\gamma|-2}{\gamma\choose\delta}\partial^{\gamma-\delta}u\cdot\nabla\partial^{\alpha+\delta}\theta\\
                                &\quad+\sum_{|\beta|=|\alpha|-1}{\alpha\choose\beta}\sum_{|\delta|\leq |\gamma|-1}{\gamma\choose\gamma}\partial^{\alpha-\beta+\gamma-\delta}u\cdot\nabla\partial^{\beta+\gamma}\theta\\
                                &\quad+\sum_{\beta=0}{\alpha\choose\beta}\sum_{\delta\neq0}{\gamma\choose\delta}\partial^{\alpha+\gamma-\delta}u\cdot\nabla \partial^\delta\theta\\
                                &=\mathcal{R}_{31}+\mathcal{R}_{32}+\mathcal{R}_{33}.
\end{aligned}
\end{equation}
For $\mathcal{R}_{31}$, we have
\begin{equation}\label{4.21}
\begin{aligned}
&\|\mathcal{R}_{31}\|_{L^2} \leq C\sum_{\beta= \alpha}{\alpha\choose\alpha}\sum_{|\delta|\leq|\gamma|-2}{\gamma\choose\delta}\|\partial^{\gamma-\delta}u\cdot\nabla\partial^{\alpha+\delta}\theta\|_{L^2}\\
                                  &\leq C\|\partial^{\gamma-\delta} u\|_{L^\infty}\|\nabla\partial^{\alpha+\delta}\theta\|_{L^2}\\
                                  &\leq C\|\partial^{\gamma-\delta}u \|_{H^k}\|\partial^\alpha \theta \|_{H^{k-1}}\\
                                  &\leq C\|\partial^{\gamma-\delta}(u,\theta)\|_{H^k}\|\partial^\alpha(u,\theta)\|_{H^{k-1}}\\
                                  &\leq C\|(u_0,\theta_0)\|_{H^{2k}}\exp\big[C_0\int_0^t(1+\|\nabla u\|_{L^\infty}+\|\nabla \theta\|_{L^\infty})ds\big]\sup_{j:\alpha_j\geq1}\|\partial^{\alpha-e_j}(u,\theta)\|_{H^k}\\
                                  &\leq C\|(u_0,\theta_0)\|_{H^{2k}}\exp\big[C_0\int_0^t(1+\|\nabla u\|_{L^\infty}+\|\nabla \theta\|_{L^\infty})ds\big]\\
                                  &\times M_{N-1}BA^{N-2}\exp\big[C_0(N-2)\int_0^t(1+\|\nabla u\|_{L^\infty}+\|\nabla \theta\|_{L^\infty})ds\big](1+C_1Bt)^{N-3}\\
                                  &\leq CM_{N-1}B^2A^{N-2}\exp\big[C_0(N-1)\int_0^t(1+\|\nabla u\|_{L^\infty}+\|\nabla \theta\|_{L^\infty})ds\big](1+C_1Bt)^{N-3},
\end{aligned}
\end{equation}
where we used the fact $|\gamma-\delta|+k\leq 2k$ in \eqref{4.21}, the inductive hypothesis \eqref{4.1}, and the fact that $B>\|(u_0,\theta_0)\|_{H^{2k+1}}$.
For $\mathcal{R}_{32}$, we have
\begin{equation}\label{4.22}
\begin{aligned}
&\|\mathcal{R}_{32}\|_{L^2} \leq \sum_{|\beta|=|\alpha|-1}{\alpha\choose\beta}\sum_{|\delta|\leq |\gamma|-1}{\gamma\choose\gamma}\|{\partial^{\alpha-\beta+\gamma-\delta}u\cdot\nabla\partial^{\beta+\gamma}\theta}\|_{L^2}\\
                               &\leq |\alpha|\|\partial^{\alpha-\beta+\gamma-\delta}u\|_{L^\infty}\|\nabla\partial^{\beta+\delta}\theta\|_{L^2}\\
                               &\leq C|\alpha| \|\partial^{\gamma-\delta}u\|_{H^k}\| \partial^\beta\theta\|_{H^k}\\
                               &\leq C|\alpha| \|\partial^{\gamma-\delta}(u,\theta)\|_{H^k}\| \partial^\beta(u,\theta)\|_{H^k}\\
                               &\leq CNM_{N-1}\|(u_0,\theta_0) \|_{H^{2k}}\exp[C_0\int_0^t(1+\|\nabla u\|_{L^\infty}+\|\nabla\theta\|_{L^\infty})ds]\\
                               &\times BA^{N-2}\exp\big[C_0(N-2)\int_0^t(1+\|\nabla u\|_{L^\infty}+\|\nabla \theta\|_{L^\infty})ds\big](1+C_1Bt)^{N-3}\\
                               &\leq CM_{N-1}B^2A^{N-2}\exp\big[C_0(N-1)\int_0^t(1+\|\nabla u\|_{L^\infty}+\|\nabla \theta\|_{L^\infty})ds\big](1+C_1Bt)^{N-3}.
\end{aligned}
\end{equation}
For $\mathcal{R}_{33}$, in a similar way we have
\begin{equation}\label{4.23}
\begin{aligned}
&\|\mathcal{R}_{33}\|_{L^2} \leq \sum_{\beta=0}{\alpha\choose\beta}\sum_{\delta\neq0}{\gamma\choose\delta}\|{\partial^{\alpha+\gamma-\delta}u\cdot\nabla \partial^\delta\theta}\|_{L^2}\\
                               &\leq C\|\partial^\alpha u\|_{H^{k-1}}\|\nabla\partial^\delta \theta\|_{H^k}\\
                               &\leq CM_{N-1}B^2A^{N-2}\exp\big[C_0(N-1)\int_0^t(1+\|\nabla u\|_{L^\infty}+\|\nabla \theta\|_{L^\infty})ds\big](1+C_1Bt)^{N-3}.
\end{aligned}
\end{equation}
Summing up \eqref{4.21},\eqref{4.22} and \eqref{4.23}, we obtain
\begin{equation}\nonumber
\begin{aligned}
\|\mathcal{I}_2\|_{L^2} &\leq CNM_N(\|\nabla u\|_{L^\infty}+\|\nabla \theta\|_{L^\infty})\mathcal{E}_N[(u,\theta)]\\
                           &\quad+CNM_NB^2A^{N-1}\exp\bigg(C_0(N-1)\int_0^t\big[1+\norm{\nabla u(s)}_{L^\infty}+\norm{\nabla \theta(s)}_{L^\infty}\big]ds\bigg).
\end{aligned}
\end{equation}
Combining the estimates $\mathcal{I}_1$ and $\mathcal{I}_2$, we have from \eqref{4.4}
\begin{equation}\label{4.24}
\begin{aligned}
&\frac{d}{dt}\frac{\|\partial^\alpha(u,\theta)(t)\|_{H^k}}{M_{|\alpha|}}\leq C(N-1)(1+\|\nabla u(t)\|_{L^\infty}+\|\nabla\theta(t)\|_{L^\infty})\mathcal{E}_{N}[(u,\theta)(t)]\\
&\qquad+CNA^{N-1}B^2\exp\bigg(C_0(N-1)\int_0^t(1+\|\nabla u(s)\|_{L^\infty}+\|\nabla \theta(s)\|_{L^\infty})ds \bigg)\\
&\qquad\qquad\times(1+C_1Bt)^{N-3},
\end{aligned}
\end{equation}
where we used the Cauchy-Schwartz inequality and the fact $\frac{N}{N-1}\leq \frac{3}{2}$ if $N\geq3$ and the constant $C>0$ depending only on the dimension $d$ and $k$.

Now we integrate \eqref{4.24} from $0$ to $t$ and take the supremum on $|\alpha|=N$. We obtain
\begin{equation}\label{4.25}
\begin{aligned}
\mathcal{E}_N[(u,\theta)(t)] &\leq \mathcal{E}_N[(u_0,\theta_0)]\\
&\quad+\int_0^t C(N-1)(1+\|\nabla u(t)\|_{L^\infty}+\|\nabla\theta(t)\|_{L^\infty})\mathcal{E}_{N}[(u,\theta)(s)]ds\\
&\quad+\int_0^t CNA^{N-1}B^2 (1+C_1Bs)^{N-3}\\
&\quad\quad\quad \times \exp\bigg(C_0(N-1)\int_0^s(1+\|\nabla u(\ell)\|_{L^\infty}+\|\nabla \theta(\ell)\|_{L^\infty})d\ell \bigg)ds.
\end{aligned}
\end{equation}

We can take $C_0\geq C$ in \eqref{4.25}, so that Grownwall inequality [Lemma 2.1 in \cite{CN}] gives
\begin{equation}\nonumber
\begin{aligned}
\mathcal{E}_N[(u,\theta)(t)] & \leq \exp\bigg(C_0(N-1)\int_0^t (1+\|\nabla u\|_{L^\infty}+\|\nabla\theta\|_{L^\infty})ds \bigg)\\
&\quad\times \big[\mathcal{E}_N[(u,\theta)(0)] +CNB^2A^{N-1}\int_0^t(1+C_1Bs)^{N-3}ds \big]\\
                                                    &\leq  \exp\bigg(C_0(N-1)\int_0^t (1+\|\nabla u\|_{L^\infty}+\|\nabla\theta\|_{L^\infty})ds \bigg) \\
&\quad\times \big[\mathcal{E}_N[(u,\theta)(0)]+\frac{CN}{C_1(N-2)}BA^{N-1}(1+C_1Bt)^{N-2}\big].
\end{aligned}
\end{equation}

Note that we have $\mathcal{E}_N[(u,\theta)(0)]\leq BA^{N-1}$ by the assumption \eqref{2.2}. If we choose $C_1=3C$, so that $\frac{3C}{C_1}=1$, since $A\geq1, N\geq3$ we have
\begin{equation}\nonumber
\begin{aligned}
\mathcal{E}_N[(u,\theta)(0)] &+\frac{CN}{C_1(N-2)}BA^{N-1}(1+C_1Bt)^{N-2}\\
                                                    &\leq \big[BA^{N-1}+\frac{3C}{C_1}BA^{N-1}(1+C_1Bt)^{N-2} \big]\\
                                                    &\leq 2BA^{N-1}(1+C_1Bt)^{N-2},
\end{aligned}
\end{equation}
and we obtain exactly \eqref{4.1} for $|\alpha|=N$. Then the theorem is proved.

\end{proof}

\bigskip
\noindent{\bf Acknowledgements.}
The research of the second author is supported partially by
``The Fundamental Research Funds for Central Universities of China".

\end{document}